\newtheorem{te}{Theorem}[section]
\newtheorem{os}[te]{Remark}
\newtheorem{prop}[te]{Proposition}
\newtheorem{coro}[te]{Corollary}
\numberwithin{equation}{section}
\begin{document}

    \title[]{Random motions with space-varying velocities}

    \author{Roberto Garra$^1$}

    \author{Enzo Orsingher$^1$}
    \address{${}^1$Dipartimento di Scienze Statistiche, ``Sapienza'' Universit\`a di Roma.}

    \keywords{Planar random motions, damped wave equations, Euler-Poisson-Darboux fractional equation\\
    {\it MSC 2010\/}: 60G60, 35R11}

    \date{\today}

    \begin{abstract}
    Random motions on the line and on the plane with space-varying velocities are considered and analyzed in this paper.
    On the line we investigate symmetric and asymmetric telegraph processes with space-dependent velocities 
    and we are able to present the explicit distribution of the position $\mathcal{T}(t)$, $t>0$, of the moving particle.
    Also the case of a non-homogeneous Poisson process (with rate $\lambda = \lambda(t)$) governing the changes of direction is
    analyzed in three specific cases. For the special case $\lambda(t)= \alpha/t$ we obtain a random motion related to the Euler-Poisson-Darboux (EPD) equation
    which generalizes the well-known case treated e.g. in \cite{foong}, \cite{markov} and \cite{Rosen}. 
    A EPD--type fractional equation is also considered and a parabolic solution (which in dimension $d=1$ has the structure of a probability density)
    is obtained.\\
    Planar random motions with space--varying velocities and infinite directions are finally analyzed in Section 5. We are able to present 
    their explicit distributions and for polynomial-type velocity structures we obtain the hyper and hypo-elliptic form of their support
    (of which we provide a picture).
    
	\smallskip

    \end{abstract}

    \maketitle

    \section{Introduction}
    
    The telegraph process represents a simple prototype of finite velocity random motions on the
    line, whose probability law is governed by a hyperbolic partial differential equation that is the classical telegraph equation, widely
    used in mathematical physics both in problems of electromagnetism and heat conduction (see for example \cite{cat}). 
    In \cite{masoliver}, the authors studied a generalization of the classical telegraph process with space-time varying propagation speed.
    Within this framework, the probabilistic model is based on the limit of a persistent random walk on a non--uniform lattice. The consequence of the assumption of a space-time depending velocity $c(x,t)$ is that the probability law of the corresponding finite velocity random motion is 
    governed by the following telegraph equation with variable coefficients
    \begin{equation}\label{1}
    \frac{\partial}{\partial t}\left[\frac{1}{c(x,t)}\frac{\partial p}{\partial t}\right]+2\lambda \frac{1}{c(x,t)}\frac{\partial p}{\partial t}
    = \frac{\partial}{\partial x}\left[c(x,t)\frac{\partial p}{\partial x}\right].
    \end{equation}
    In some cases it is possible to find the explicit form of the probability law of this generalization of the telegraph process, by solving 
    equation \eqref{1} subject to suitable initial conditions. In particular, we focus our attention on the case of space--depending velocity,
    where \eqref{1} becomes
     \begin{equation}\label{2}
        \frac{\partial^2 p}{\partial t^2}+2\lambda\frac{\partial p}{\partial t}
        = c(x)\frac{\partial}{\partial x}\left[c(x)\frac{\partial p}{\partial x}\right].
        \end{equation}
      Equation \eqref{2} can be reduced to the form of the classical 
      telegraph equation by using the change of variable
      \begin{equation}
      y =\int_0^x \frac{dx'}{c(x')}, \quad x\in \mathbb{R}
      \end{equation}
      provided that $c(x)>0$, $x\in \mathbb{R}$ such that $\int_0^x\frac{dw}{c(w)}<\infty$ for all $x\in \mathbb{R}$.
      Therefore in this case the explicit probability law is simply given 
      by
      \begin{align}
      \nonumber & p(x,t)= \frac{e^{-\lambda t}}{2}\bigg\{\delta\left(t-\bigg|\int_0^x\frac{dx'}{c(x')}\bigg|\right)+\delta\left(t+\bigg|\int_0^x\frac{dx'}{c(x')}\bigg|\right)\bigg\}\\
      \nonumber &+\frac{e^{-\lambda t}}{2c(x)}\bigg[\lambda I_0\bigg(\lambda \sqrt{t^2-\bigg|\int_0^x\frac{dx'}{c(x')}\bigg|^2}\bigg)
      +\frac{\partial}{\partial t} I_0\bigg(\lambda \sqrt{t^2-\bigg|\int_0^x\frac{dx'}{c(x')}\bigg|^2}\bigg)\bigg] \times \mathbf{1}_{D}(x)
      \bigg\},
      \end{align}
	where $\mathbf{1}_D$ is the characteristic function of the set 
	$$D:= \bigg\{x\in \mathbb{R}:\bigg|\left(\int_0^x\frac{dx'}{c(x')}\right)\bigg|<t\bigg\}$$
    and $I_0(\cdot)$ is the modified Bessel function of order zero. \\
    Taking for example $c(x)= |x|^\alpha$, the endpoints of the domain $D$ are given by $\pm[(1-\alpha)t]^{\frac{1}{1-\alpha}}$ for $\alpha<1$ and become $\pm \infty$ for $\alpha\geq 1$. \\
    In this paper we consider the asymmetric telegraph process with space-varying velocity and also the symmetric telegraph
    process with a non-homogeneous Poisson process governing the changes of space-dependent velocities.\\
    A section is devoted to a fractional Euler-Poisson-Darboux-type equation and to the discussion of a special class 
    of non-negative solutions.
	
	While the telegraph process on the line is essentially a persistent random walk with only two possible directions, the picture of finite velocity random motions on the plane and in the space is more complicated and gives rise to the studies of random flights (see for example
	\cite{ale1,ale2,bruno,pogorui}).
	An interesting result, in this context, was proved by Kolesnik and Orsingher in \cite{kole}, where the connection between planar random motions with an infinite number of possible directions and the damped wave equation was discussed. In their model, the motion is described by
	a particle taking directions $\theta_j$ , $j = 1, 2,...$, uniformly
	distributed in $[0; 2\pi)$ at Poisson paced times. The orientations
	$\theta_j$ are i.i.d. r.v.'s independent from the homogeneous Poisson process 
	$N(t)$ of rate $\lambda$ governing the changes of direction. The particle
	starts off at time $t=0$ from the origin and moves with constant velocity $c$. 
	At the epochs of the Poisson process the particle takes new directions (uniformly distributed in $[0,2\pi)$),
	independent from its previous evolution.
	Under these assumptions, it is possible to prove that the explicit probability law of the current position $(X(t),Y(t))$ of the randomly moving particle is a solution of the damped wave equation
	\begin{equation}\label{1.4}
	\frac{\partial^2 p}{\partial t^2}+2\lambda \frac{\partial p}{\partial t}= c^2 \left[\frac{\partial^2 p}{\partial x^2}+\frac{\partial^2 p}{\partial y^2}\right].
	\end{equation}   
	
	\smallskip
	
	In the last part of this paper, we consider the effect of a space-varying speed of propagation on the model of planar random motions with infinite possible directions, leading to the equation
	\begin{equation}\label{1.5}
	\frac{\partial^2 p}{\partial t^2}+2\lambda \frac{\partial p}{\partial t}= c_1(x) \frac{\partial}{\partial x}\left(c_1(x)\frac{\partial p}{\partial x}\right)+c_2(y) \frac{\partial}{\partial y}\left(c_2(y)\frac{\partial p}{\partial y}\right).
	\end{equation}
	 We show the consequence of assuming space-varying velocities on the form of the support $\mathcal{D}$ of the distribution of $(X(t),Y(t))$. By means of the transformation 
	 \begin{align}
	 \nonumber & u = \int_0^x \frac{dw}{c_1(w)}\\
	 \nonumber & v = \int_0^y \frac{dz}{c_2(z)},
	 \end{align}
	 the equation \eqref{1.5} is reduced to the form \eqref{1.4} and thus
	 we can obtain the explicit distribution $p(x,y,t)$ of $(X(t),Y(t))$.
	 We then examine the form of the support of $p = p(x,y,t)$ and analyze
	 its dependence on the space-varying velocity.
	 
	In the special case where $c_1(x)= |x|^\gamma/c_1$, $c_2(y)= |y|^\beta/c_2$, $\gamma,\beta < 1$, we obtain that the boundary of $\mathcal{D}$ is hyper-elliptic for $\gamma = \beta <0$ and
	hypo-elliptic for $\gamma = \beta >0$ and elliptic for $\gamma = \beta = 0$.
	
	\section{Telegraph process with drift and space--varying velocity}
	
	In this section we consider a generalization of the telegraph process with drift considered by Beghin et al. (see reference 
	\cite{luciano}) in the case where the velocity is assumed to be space--varying. In particular, here we consider the random motion of a particle moving on the line and switching from  
	the space-varying (positive) velocity $c(x)$ to $-c(x)$ after an exponentially distributed time with rate $\lambda_1$ and from $-c(x)$ to $c(x)$ after an exponential time with a different rate $\lambda_2$.
	For the description of the random position of the particle $X(t)$ at time $t>0$ we use the following probability densities
	\begin{equation}
	\begin{cases}
	f(x,t)dx=P\{X(t)\in dx, V(t)= c(x) \}\\
	b(x,t)dx= P\{X(t)\in dx, V(t)= -c(x) \},
	\end{cases}
	\end{equation}
	satisfying the system of partial differential equations (see \cite{e85} for a detailed probabilistic derivation)
		\begin{equation}
		\begin{cases}
		\displaystyle \frac{\partial f}{\partial t}=-c(x)\frac{\partial f}{\partial x}-\lambda_1 f +\lambda_2 b\\
		\displaystyle\frac{\partial b}{\partial t}=c(x)\frac{\partial b}{\partial x}+\lambda_1 f -\lambda_2 b.
		\end{cases}
		\end{equation}
		Defining 
		\begin{equation}
		p(x,t)= f+b, \quad w = f-b,
		\end{equation}
		we have the following system of equations
		\begin{equation}
				\begin{cases}
			\displaystyle	\frac{\partial p}{\partial t}=-c(x)\frac{\partial w}{\partial x}\\
			\displaystyle	\frac{\partial w}{\partial t}=-c(x)\frac{\partial p}{\partial x}+\lambda_2 (p-w) -\lambda_1 (p+w).
				\end{cases}
				\end{equation}
		Therefore the probability law $p(x,t)$ is governed by
		the following telegraph-type equation with space-varying velocity and drift
		\begin{equation}\label{rel}
		\frac{\partial^2 p}{\partial t^2}+(\lambda_1+\lambda_2)\frac{\partial p}{\partial t} = 
		c(x)\frac{\partial}{\partial x} (c(x)\frac{\partial p}{\partial x})+c(x)(\lambda_1-\lambda_2)\frac{\partial p}{\partial x}.
		\end{equation}
		In order to eliminate the drift term and to find the explicit form of the probability law, we now introduce the following
		Lorentz-type transformation of variables
		\begin{equation}\label{lor}
		\begin{cases}
		x'= A\displaystyle\int_0^x\frac{dw}{c(w)}+Bt\\
		t' = C\displaystyle\int_0^x\frac{dw}{c(w)}+Dt.
		\end{cases}
		\end{equation}
		By means of some calculation we obtain that, by taking the following choice of the coefficients appearing in \eqref{lor}
		\begin{equation}
		A= D= 1, \quad B= C = \frac{\lambda_1-\lambda_2}{\lambda_1+\lambda_2},
		\end{equation}
		equation \eqref{rel} becomes the classical telegraph equation 
		\begin{equation}
		\frac{\partial^2 p}{\partial t'^2}+(\lambda_1+\lambda_2)\frac{\partial p}{\partial t'}= \frac{\partial^2 p}{\partial x'^2}
		\end{equation}
		and we can therefore find the explicit probability law, starting from that of the classical telegraph process.
	
	\section{Non-homogeneous telegraph processes with space--varying velocities}
	
	Let us recall that a telegraph process $\mathcal{T}(t)$, $t>0$, where changes of direction are paced by a non-homogeneous Poisson process, denoted by $\mathcal{N}(t) $, with time-dependent rate $\lambda(t)$, $t>0$, has distribution $p(x,t)$ satisfying
		the Cauchy problem (see e.g. \cite{kaplan})
		\begin{equation}\label{sec3}
		\begin{cases}
		\displaystyle\frac{\partial^2 p}{\partial t^2}+2\lambda(t)\frac{\partial p}{\partial t}= c^2\frac{\partial^2 p}{\partial x^2},\\
		\displaystyle p(x,0)=\delta(x),\quad \frac{\partial p}{\partial t}(x,t)\big|_{t=0}=0.
		\end{cases}
		\end{equation}
	     In order to obtain explicit distributions in some specific cases, we observe that, by means of the exponential transformation
			\begin{equation}\label{exp}
			p(x,t)= e^{-\int_0^t\lambda(s)ds} v(x,t),
			\end{equation}
			we convert the equation in \eqref{sec3} into
			\begin{equation}\label{KG}
			\frac{\partial^2 v}{\partial t^2}-[\lambda'(t)+\lambda^2(t)]v= c^2\frac{\partial^2 v}{\partial x^2}.
			\end{equation}
			Then, in order to find the explicit probability law of $\mathcal{T}(t)$ from \eqref{sec3}, a mathematical trick is to solve the following 
			Riccati equation emerging from \eqref{KG} (see \cite{markov} and \cite{Iacus})
			\begin{equation}\label{Riccati}
			\lambda'(t)+\lambda^2(t)= const. 
			\end{equation}
		    In this way, it is possible to find, in particular, the following probability laws with absolutely continuous components given by
		    \begin{equation}
		    	P\bigg\{Y(t) \in dx\bigg\}/dx=\frac{1}{2c \, \cosh \lambda t}\frac{\partial}{\partial t}I_0\left(\frac{\lambda}{c}\sqrt{c^2t^2-x^2}\right)
		    	    \label{ras}, \quad |x|<ct,
		    	\end{equation}
		    and
		    \begin{equation}\label{pro1}
		    	   P\bigg\{X(t)\in dx\bigg\}/dx=\frac{\lambda I_0\left(\frac{\lambda}{c}\sqrt{c^2t^2-x^2}\right)}{2c\sinh \lambda
		    	     t }, \quad |x|<ct,
		    	   \end{equation}
		    corresponding to the cases
		    	\begin{equation*}
		    	\begin{cases}
		    	\lambda(t)= \lambda\tanh \lambda t,\\
		    	\lambda(t)= \lambda \coth \lambda t,
		    	\end{cases}
		    	\end{equation*} 
		    	respectively. We observe that the process $Y(t)$ has a discrete component of the distribution concentrated at $x = \pm ct$ (see \cite{Iacus}), while $X(t)$ has only an 
		    	absolutely continuous distribution (see \cite{markov}).
		    	
		     Starting from \eqref{ras} and \eqref{pro1}, we can clearly build other families of explicit probability laws of the form
		    \begin{equation}
		    \begin{cases}
		    	P\bigg\{Y(t) \in dx\bigg\}/dx=\frac{1}{\displaystyle 2 c(x) \, \cosh \lambda t}\frac{\partial}{\partial t}I_0\left(\lambda\sqrt{t^2-\displaystyle\bigg|\int_0^x\frac{dx'}{c(x')}\bigg|^2}\right),  \\
		    	 P\bigg\{X(t)\in dx\bigg\}/dx=\frac{\lambda}{\displaystyle 2c(x) \sinh \lambda t }I_0\left(\lambda\sqrt{t^2-\displaystyle\bigg|\int_0^x\frac{dx'}{c(x')}\bigg|^2}\right)	 ,  \quad \mbox{for $\displaystyle \bigg\{x:\bigg|\int_0^x\frac{dx'}{c(x')}\bigg|<t\bigg\}$,} 	    
		    \end{cases}
		    \end{equation}
		    which depend on the particular choice of $c(x)>0$ (s.t. $\int_0^\infty \frac{dw}{c(w)}<\infty$ for all $x$). These probability laws are clearly related to the following partial differential equations
		    \begin{equation}
		    \begin{cases}
		    &\displaystyle{\frac{\partial^2 p}{\partial t^2}+2\lambda\tanh \lambda t\frac{\partial p}{\partial t}= c(x)\frac{\partial}{\partial x}c(x)\frac{\partial p}{\partial x}},\\
		    &\displaystyle{\frac{\partial^2 p}{\partial t^2}+2\lambda \coth\lambda t\frac{\partial p}{\partial t}= c(x)\frac{\partial}{\partial x}c(x)\frac{\partial p}{\partial x}},\\
		    \end{cases}
		    \end{equation}
			respectively.
			
			Another interesting case is $\lambda(t)=\frac{\alpha}{t}$, which converts equation \eqref{sec3} into the classical Euler-Poisson-Darboux equation.

				 The first probabilistic interpretation of the fundamental solution of the EPD equation was given by Rosencrans in \cite{Rosen} and some of its generalizations have been considered in \cite{markov}.
				In the spirit of the previous observations, we have that
				the solution of the Cauchy problem 
				\begin{equation}
				\begin{cases}
				\displaystyle\frac{\partial^2 v}{\partial t^2}+\frac{2\alpha}{t}\frac{\partial v}{\partial t}= 
				c(x)\frac{\partial}{\partial x}c(x)\frac{\partial v}{\partial x},\\
				v(x,0)=\delta(x),\\
				\frac{\partial v}{\partial t}\bigg|_{t=0}=0
				\end{cases}
				\end{equation}
				can be written as
				  \begin{equation}\label{fon}
				  v(x,t) =  \frac{1}{B(\alpha, \frac{1}{2})\, c(x)t}\left(1-\frac{\bigg|\displaystyle\int_0^x\frac{dx'}{c(x')}\bigg|^2}{t^2}\right)^{\alpha-1},
				     \quad \mbox{for $\bigg\{\displaystyle x:\bigg|\int_0^x\frac{dw}{c(w)}\bigg|<t\bigg\}$.}
				     \end{equation}

		We finally observe that it is possible to consider other cases of non-homogeneous telegraph processes with space-dependent velocities according to the following simple steps:
		\begin{itemize}
		\item Consider the equation
		\begin{equation}
		\frac{\partial^2 p}{\partial t^2}+2\lambda(t)\frac{\partial p}{\partial t}= c(x)\frac{\partial}{\partial x}c(x)\frac{\partial p}{\partial x},
		\end{equation} 
		governing a telegraph process on the line, where the changes of direction are given by a non-homogeneous Poisson process with a deterministic time-dependent rate $\lambda(t)$ and with space-dependent velocity $c(x)$. 
		\item Define the new variables $x'= \int_0^x\frac{du}{c(u)}$ and $t'= \int_0^t \gamma(s)ds$, where $\gamma(t)$ is a $C^1[0,+\infty)$ function that will be defined in the next step;
		\item In the new variables we have that $p(x',t')$ satisfies the equation
		\begin{equation}
		\gamma^2(t')\frac{\partial^2 p}{\partial t'^2}+\left(\gamma'+2\lambda\gamma\right)\frac{\partial p}{\partial t'}= \frac{\partial^2p}{\partial x'^2};
		\end{equation}
		\item Take $\gamma(t)$ such that $\frac{\gamma'}{\gamma}= -2\lambda(t)$. Then the problem is finally reduced to the following D'Alembert equation with time-depending coefficient
		\begin{equation}\label{daa}
		\frac{\partial^2 p}{\partial t'^2}= \frac{1}{\gamma^2(t')}\frac{\partial^2 p}{\partial x'^2}.
		\end{equation}
		\item By taking the further change of variable $(x',t')\rightarrow (\gamma(t')x',t')$ and calling $x''=\gamma(t')x'$ we finally reduce
		equation \eqref{daa} to the classical D'Alembert equation in the variables $(x'',t')$
		\begin{equation}
		\frac{\partial^2 u}{\partial t'^2}= \frac{\partial^2 u}{\partial x''^2}.
		\end{equation} 
		Thus an observer in the framework $(x'',t')$ sees the original random motion transformed into a deterministic one governed by the classical D'Alembert equation.
		\end{itemize}

	\section{Time-fractional Euler-Poisson-Darboux equation with variable velocity}
	
	We here provide some new results about the Euler-Poisson-Darboux equation involving time-fractional derivatives in the sense of Riemann-Liouville (see \cite{Kilbas}) and with space-varying velocity. 
	It is well-known that the EPD equation governs a telegraph process with time-dependent rate $\lambda(t)= \alpha/t$. As far as we know this is the first investigation about the time-fractional EPD equation.
	
	\begin{te}
	The $d$-dimensional time fractional EPD-type equation
	\begin{equation}\label{epd}
	\left(\frac{\partial^{2\nu}}{\partial t^{2\nu}}+\frac{C_1}{t^\nu}\frac{\partial^\nu}{\partial t^\nu}\right)u = \Delta u,
	\end{equation}
	with $\nu \in (0,1)\setminus\{\frac{1}{2}, \frac{1}{3},\frac{1}{4}, \frac{1}{5}\}$ and
	\begin{equation}
	C_1 = -\frac{\Gamma(1-4\nu)}{\Gamma(1-5\nu)},
	\end{equation}
	admits the following non-negative solution:\\
	for $C_2>0$
	\begin{equation}\label{solutio}
	u(\mathbf{x}_d,t)= \begin{cases}
	\displaystyle\frac{1}{t^\nu}\bigg[1-C_2\frac{ \|\mathbf{x}_d\|^2}{t^{2\nu}}\bigg], \quad & \|\mathbf{x}_d\|< \frac{t^{\nu}}{C_2^{1/2}}, \\
	0 \quad &\mbox{elsewhere},
	\end{cases}
	\end{equation}
	while for $C_2<0$
	\begin{equation}\label{solutio2}
		u(\mathbf{x}_d,t)=
		\frac{1}{t^\nu}\bigg[1-C_2\frac{ \|\mathbf{x}_d\|^2}{t^{2\nu}}\bigg], \quad 	\forall \ \mathbf{x}_d\in \mathbb{R}^d
		\end{equation}
    where $\mathbf{x}_d= (x_1,x_2, \dots,x_d)$, 
	$d\in \mathbb{N}$ and 
	$$C_2= -\frac{1}{2d}\bigg[\frac{\Gamma(1-\nu)}{\Gamma(1-3\nu)}
	-\frac{\Gamma(1-4\nu)}{\Gamma(1-5\nu)}\frac{\Gamma(1-\nu)}{\Gamma(1-2\nu)}\bigg]$$
	\end{te}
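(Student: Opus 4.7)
The plan is to verify the ansatz by direct substitution into \eqref{epd}, exploiting the Riemann--Liouville power rule
$$\frac{\partial^{\mu}}{\partial t^{\mu}}\,t^{\beta}\;=\;\frac{\Gamma(\beta+1)}{\Gamma(\beta-\mu+1)}\,t^{\beta-\mu}$$
applied to the only two $t$-monomials appearing in the candidate solution. I rewrite $u(\mathbf{x}_d,t)=t^{-\nu}-C_2\|\mathbf{x}_d\|^2 t^{-3\nu}$ on the interior of the support (for $C_2>0$) or on all of $\mathbb{R}^d$ (for $C_2<0$). Outside the parabolic support in the $C_2>0$ case $u$ vanishes identically and the equation is trivially satisfied, so the essential work is confined to the interior.

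For the spatial side, $\Delta\|\mathbf{x}_d\|^2=2d$ yields $\Delta u=-2d\,C_2\,t^{-3\nu}$. For the temporal side, the power rule applied with $\mu\in\{\nu,2\nu\}$ to the exponents $\beta\in\{-\nu,-3\nu\}$ gives
$$\frac{\partial^{2\nu}}{\partial t^{2\nu}}u\;=\;\frac{\Gamma(1-\nu)}{\Gamma(1-3\nu)}\,t^{-3\nu}\;-\;C_2\,\|\mathbf{x}_d\|^2\,\frac{\Gamma(1-3\nu)}{\Gamma(1-5\nu)}\,t^{-5\nu},$$
$$\frac{1}{t^\nu}\frac{\partial^{\nu}}{\partial t^{\nu}}u\;=\;\frac{\Gamma(1-\nu)}{\Gamma(1-2\nu)}\,t^{-3\nu}\;-\;C_2\,\|\mathbf{x}_d\|^2\,\frac{\Gamma(1-3\nu)}{\Gamma(1-4\nu)}\,t^{-5\nu}.$$
Substituting into \eqref{epd}, the left-hand side becomes a linear combination of $t^{-3\nu}$ and $\|\mathbf{x}_d\|^2 t^{-5\nu}$, while the right-hand side contains only the former. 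Since these two functions are linearly independent on $\mathbb{R}^d\times(0,\infty)$, their coefficients must match separately.

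The vanishing of the $\|\mathbf{x}_d\|^2 t^{-5\nu}$ coefficient forces
$$\frac{\Gamma(1-3\nu)}{\Gamma(1-5\nu)}\;+\;C_1\,\frac{\Gamma(1-3\nu)}{\Gamma(1-4\nu)}\;=\;0,$$
which, after cancelling the common factor $\Gamma(1-3\nu)$, reduces to the stated formula for $C_1$. Matching the coefficient of $t^{-3\nu}$ then gives
$$\frac{\Gamma(1-\nu)}{\Gamma(1-3\nu)}\;+\;C_1\,\frac{\Gamma(1-\nu)}{\Gamma(1-2\nu)}\;=\;-2d\,C_2,$$
and substituting the value of $C_1$ just found reproduces the displayed expression for $C_2$. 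Non-negativity is then immediate: the bracket $1-C_2\|\mathbf{x}_d\|^2/t^{2\nu}$ is at least $1$ when $C_2<0$ and is strictly positive on the interior of the support (being continued by zero outside) when $C_2>0$.

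The main delicate point is the admissibility of the power rule. The exclusion set $\{1/2,1/3,1/4,1/5\}$ consists precisely of those $\nu\in(0,1)$ at which one of the gamma functions in the identities above meets a pole and the coefficient extraction degenerates; outside this discrete set the symbolic manipulations are unambiguous. A secondary issue, worth a brief comment rather than a full argument, is the boundary of the support in the $C_2>0$ regime, where $u$ is merely continuous: there the PDE must be interpreted in a limiting or distributional sense across the parabolic frontier, while the verifications in the interior and exterior are as above.
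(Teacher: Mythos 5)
Your proof is correct and follows essentially the same route as the paper's: substitute the parabolic ansatz $u=t^{-\nu}-C_2\|\mathbf{x}_d\|^2t^{-3\nu}$, apply the Riemann--Liouville power rule to the two time monomials, and match the coefficients of $t^{-3\nu}$ and of $\|\mathbf{x}_d\|^2t^{-5\nu}$ separately, which yields exactly the stated $C_1$ and $C_2$. The paper's own proof leaves all of these computations implicit (it only cites the power rule and asserts that $C_2$ can be so determined), so your write-up simply fills in the details of the same argument.
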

	\begin{proof}
	By considering that \eqref{epd} has the structure of an EPD equation, we determine a parabolic-type solution of it. By using the well-known fact that (see \cite{Kilbas},
	pag.71)
	\begin{equation}
	\frac{\partial^\alpha t^\beta}{\partial t^\alpha}= \frac{\Gamma(\beta+1)t^{\beta-\alpha}}{\Gamma(\beta+1-\alpha)}, \quad \mbox{for $\alpha >0$ and $\beta>-1$},
	\end{equation}
	we can calculate the exact form of the coefficient $C_2$ such that \eqref{solutio} is a solution of \eqref{epd}. We assume that $\nu\neq \frac{1}{2}, \frac{1}{3},\frac{1}{4}, \frac{1}{5}$ in order to avoid the singularities in the coefficients appearing in $C_1$ and $C_2$. 
	\end{proof}
	
	\begin{os}
	It is possible to construct a probability law with compact support, starting from the general Theorem 4.1 in the one dimensional case, assuming that $\nu$ is such that $C_2$ is positive. In this case we have that the probability law  
		\begin{equation}\label{sol}
		p(x,t)= \frac{N}{t^\nu}\bigg[1-C_2\frac{|x|^2}{t^{2\nu}}\bigg], \quad |x|< \frac{t^{\nu}}{C_2^{1/2}},
		\end{equation}
	    with 
		$$C_2= -\frac{1}{2}\bigg[\frac{\Gamma(1-\nu)}{\Gamma(1-3\nu)}
		-\frac{\Gamma(1-4\nu)}{\Gamma(1-5\nu)}\frac{\Gamma(1-\nu)}{\Gamma(1-2\nu)}\bigg]$$ and 
		$N= \frac{3}{4}\sqrt{C_2}$ the normalizing constant, satisfies the one dimensional time-fractional EPD-type equation \eqref{epd}.\\
		We remark that it is not trivial matter to find the explicit values of $\nu \in (0,1)$ such that the coefficient $C_2>0$.
	\end{os}
	
	Notice that it is extremely hard to ascertain that functions of the form 
	\begin{equation}
	u(x,t)= \frac{N}{t^\beta}\left(1-\frac{ \|\mathbf{x}_d\|^2}{t^\alpha}\right)^\gamma,
	\end{equation}
	are solutions of \eqref{epd} for $\gamma \neq 1$ and suitable $\beta$ and $\alpha$.
	
	\bigskip
	
	We can also observe, with the following Proposition, that we are able to find a solution for a time-fractional EPD-type equation of higher order.
	
	\begin{prop}
	The $d$-dimensional time fractional EPD-type equation
	\begin{equation}\label{epdho}
	\left(\frac{\partial^{2\nu}}{\partial t^{2\nu}}+\frac{C_1}{t^\nu}\frac{\partial^\nu}{\partial t^\nu}\right)u = \sum_{j=1}^d\frac{\partial^{2n}u}{\partial x_j^{2n}}, \quad n\in \mathbb{N},
	\end{equation}
	with $\nu \in (0,1)\setminus\{\frac{1}{2}, \frac{1}{3},\frac{1}{4}, \frac{1}{5}\}$ and
	\begin{equation}
	C_1 = -\frac{\Gamma(1-4\nu)}{\Gamma(1-5\nu)},
	\end{equation}
	admits the following non-negative solution:\\
	for $C_2>0$
	\begin{equation}
	u(x_1, \dots, x_d,t)= \begin{cases}\frac{1}{t^\nu}\bigg[1-C_2\frac{\sum_{j=1}^d x_j^{2n}}{t^{2\nu}}\bigg], \quad & \displaystyle \sum_{j=1}^d x_j^{2n}<\frac{t^{2\nu}}{C_2},\\
	0\quad & \mbox{elsewhere}
	\end{cases}
	\end{equation}
	and for $C_2<0$
	\begin{equation}
	u(x_1, \dots, x_d,t)= \displaystyle \displaystyle \frac{1}{t^\nu}\bigg[1-C_2\frac{\sum_{j=1}^d x_j^{2n}}{t^{2\nu}}\bigg], \quad \forall \ \mathbf{x}_d\in \mathbb{R}^d
    \end{equation}
    where $d\in \mathbb{N}$ and 
	$$C_2= -\frac{1}{(2n)!d}\bigg[\frac{\Gamma(1-\nu)}{\Gamma(1-3\nu)}
	-\frac{\Gamma(1-4\nu)}{\Gamma(1-5\nu)}\frac{\Gamma(1-\nu)}{\Gamma(1-2\nu)}\bigg].$$
	\end{prop}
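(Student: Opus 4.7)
The plan is to mimic verbatim the strategy behind Theorem 4.1: substitute the parabolic-type ansatz $u(\mathbf{x}_d,t) = t^{-\nu}\bigl[1 - C_2 t^{-2\nu}\sum_{j=1}^d x_j^{2n}\bigr]$ into equation \eqref{epdho} and match like powers of $t$ and of $\sum_j x_j^{2n}$. Changing the right-hand side from $\Delta$ to $\sum_j \partial^{2n}/\partial x_j^{2n}$ affects only the combinatorial constant produced when one differentiates $x_j^{2n}$, and this factor should be absorbable into the new expression for $C_2$, while $C_1$ is expected to be unchanged because it is determined by a cancellation that does not see the spatial part.

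First I would apply the Riemann--Liouville identity $\partial^\alpha t^\beta/\partial t^\alpha = \Gamma(\beta+1)\,t^{\beta-\alpha}/\Gamma(\beta+1-\alpha)$ with $\beta \in \{-\nu, -3\nu\}$ and $\alpha \in \{\nu, 2\nu\}$. On the LHS this yields exactly two kinds of contributions: a purely temporal term of order $t^{-3\nu}$ coming from the ``$1$'' part of the ansatz, and a term of order $t^{-5\nu}\sum_j x_j^{2n}$ coming from the $C_2$ part. On the RHS, since $\partial^{2n}(x_j^{2n})/\partial x_j^{2n} = (2n)!$ and all other $x_k^{2n}$ are annihilated, the spatial operator collapses to $-C_2\,(2n)!\,d\,t^{-3\nu}$, with no $x$-dependence.

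Next I would impose compatibility. Since there is no $t^{-5\nu}\sum_j x_j^{2n}$ term on the RHS, its coefficient on the LHS must vanish; this is a one-line gamma identity, independent of $n$ and $d$, which forces $C_1 = -\Gamma(1-4\nu)/\Gamma(1-5\nu)$, exactly as in Theorem 4.1. Equating the remaining $t^{-3\nu}$ coefficients then pins down $C_2$, with the factor $(2n)!\,d$ in the denominator coming precisely from summing the $d$ contributions of $(2n)!$ produced by $\partial^{2n}_{x_j}(x_j^{2n})$.

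The only mildly delicate point will be the formal use of the Riemann--Liouville rule when $\beta = -3\nu < -1$, i.e. when $\nu > 1/3$: this is legitimate under the analytic continuation used in \cite{Kilbas}, and it is precisely to avoid the resulting gamma-function poles that the values $\nu \in \{1/2, 1/3, 1/4, 1/5\}$ are excluded from the statement. Non-negativity of $u$ in the two regimes $C_2 > 0$ (compact support $\sum_j x_j^{2n} < t^{2\nu}/C_2$) and $C_2 < 0$ (all of $\mathbb{R}^d$) is then immediate from the explicit form of the solution, so no further work is needed there.
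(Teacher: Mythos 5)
Your proposal is correct and follows exactly the route the paper intends: the paper proves Theorem 4.1 by substituting the parabolic ansatz and applying the Riemann--Liouville power rule $\partial^\alpha t^\beta/\partial t^\alpha = \Gamma(\beta+1)t^{\beta-\alpha}/\Gamma(\beta+1-\alpha)$, and the Proposition is stated without separate proof precisely because the only change is that $\partial^{2n}_{x_j}(x_j^{2n})=(2n)!$ replaces the factor $2$ from the Laplacian, yielding the $(2n)!\,d$ in the denominator of $C_2$ while the vanishing of the $t^{-5\nu}\sum_j x_j^{2n}$ term fixes $C_1$ independently of the spatial operator. Your remark about the formal use of the power rule for $\beta=-3\nu<-1$ is a point the paper itself glosses over, so flagging it is a small bonus rather than a deviation.
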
 
	
		Starting from \eqref{sol}, we have the following Corollary.
		\begin{coro}
		Taking $\nu \in (0,1)$ such that $C_2 >0$, the probability law 
		\begin{equation}\label{soluz1}
				p(x,t)= \frac{N}{c(x)t^\nu}\bigg[1-C_2\frac{\bigg|\displaystyle\int_0^x\frac{dx'}{c(x')}\bigg|^2}{t^{2\nu}}\bigg], \quad \mbox{for $\bigg\{x:\bigg|\displaystyle \int_0^x\frac{dx'}{c(x')}\bigg|< \frac{t^{\nu}}{C_2^{1/2}}\bigg\}$},
				\end{equation}
		with $N= \frac{3}{4}\sqrt{C_2}$ and
		 $$C_2= -\frac{1}{2}\bigg[\frac{\Gamma(1-\nu)}{\Gamma(1-3\nu)}
				-\frac{\Gamma(1-4\nu)}{\Gamma(1-5\nu)}\frac{\Gamma(1-\nu)}{\Gamma(1-2\nu)}\bigg],$$ 
		satisfies the time-fractional EPD-type equation with non-constant coefficients
		\begin{equation}
			\left(\frac{\partial^{2\nu}}{\partial t^{2\nu}}+\frac{C_1}{t^\nu}\frac{\partial^\nu}{\partial t^\nu}\right)p = c(x) \frac{\partial}{\partial x}c(x)\frac{\partial p}{\partial x},
			\end{equation}
		with 
		$$C_1 = -\frac{\Gamma(1-4\nu)}{\Gamma(1-5\nu)}.$$
		\end{coro}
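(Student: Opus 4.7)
The natural plan is to reduce the statement to the one-dimensional constant-coefficient case handled in Remark 4.2 by means of the familiar change of spatial variable
\[
y \;=\; \int_0^x \frac{dx'}{c(x')},
\]
already exploited in Section 1 and in the derivation of \eqref{fon}. Under this substitution $\partial_x = (1/c(x))\partial_y$, and a short direct computation yields the key identity $c(x)\partial_x\bigl(c(x)\partial_x\,\cdot\bigr) = \partial_y^2$; hence the variable-coefficient symmetric operator on the right-hand side of the Corollary flattens out. The time-fractional operator on the left is unaffected by the purely spatial change of variable.

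First I would write the candidate \eqref{soluz1} as $p(x,t) = q(y(x),t)/c(x)$, with
\[
q(y,t) \;=\; \frac{N}{t^\nu}\Bigl[1 - C_2\,\frac{y^2}{t^{2\nu}}\Bigr]
\]
and $N,C_1,C_2$ the constants of Remark 4.2. By that remark, for $C_2>0$ the function $q$ is a non-negative solution of
\[
\left(\frac{\partial^{2\nu}}{\partial t^{2\nu}} \;+\; \frac{C_1}{t^\nu}\,\frac{\partial^\nu}{\partial t^\nu}\right)q \;=\; q_{yy}
\]
on $|y|<t^\nu/\sqrt{C_2}$, and the support condition $|y(x)|<t^\nu/\sqrt{C_2}$ matches precisely the domain stated in \eqref{soluz1}. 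It then remains to transfer this identity from the $y$-picture to the $x$-picture.

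The only delicate point is the interaction between the Jacobian prefactor $1/c(x)$, included so that $p(\cdot,t)$ integrates to one in the original spatial variable, and the symmetric operator $c(x)\partial_x c(x)\partial_x$ acting on it. The plan is to compute $c(x)\partial_x\bigl(c(x)\partial_x[q(y(x),t)/c(x)]\bigr)$ directly through the chain rule, tracking the terms generated by $c'(x)$ and $c''(x)$ from the differentiation of the prefactor and verifying that they combine with the $q_y$ contributions so that the full right-hand side collapses to $q_{yy}(y(x),t)/c(x)$. Since the time operator commutes with multiplication by $1/c(x)$, the left-hand side similarly reduces to $(1/c(x))(\partial_t^{2\nu} + (C_1/t^\nu)\partial_t^\nu)q = q_{yy}(y(x),t)/c(x)$ by Remark 4.2. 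This algebraic bookkeeping is the only non-routine step and it is entirely parallel to the one carried out in the non-fractional EPD case that leads to \eqref{fon}; once it is verified, the Corollary follows.
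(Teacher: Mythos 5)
You have correctly identified the route the paper intends (the Corollary is stated as an immediate consequence of \eqref{sol} under the substitution $y=\int_0^x dx'/c(x')$, and the paper supplies no further computation), but the step you yourself single out as ``the only delicate point'' is precisely where the argument breaks. The identity $c(x)\partial_x\bigl(c(x)\partial_x f(y(x))\bigr)=f''(y(x))$ holds for a pure composition $f\circ y$, \emph{without} the Jacobian prefactor. Once you divide by $c(x)$ the extra terms do not cancel: writing $p(x,t)=q(y(x),t)/c(x)$ one finds
\begin{equation*}
c\,\partial_x\Bigl(c\,\partial_x\frac{q(y(x),t)}{c(x)}\Bigr)
=\frac{q_{yy}}{c}-\frac{2c'}{c}\,q_y+\Bigl[\frac{(c')^2}{c}-c''\Bigr]q ,
\end{equation*}
whereas the time-fractional operator applied to $q/c$ gives $q_{yy}/c$ by Remark 4.2. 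The surplus $-\tfrac{2c'}{c}\,q_y+\bigl[\tfrac{(c')^2}{c}-c''\bigr]q$ vanishes only for constant $c$, so the verification you propose cannot close for the parabolic $q$ of \eqref{sol} and a general velocity $c(x)$.

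The way out is to note that the function which satisfies the displayed PDE is $q(y(x),t)$ \emph{without} the factor $1/c(x)$: then $c\,\partial_x\bigl(c\,\partial_x q(y(x),t)\bigr)=q_{yy}(y(x),t)$ and the equation follows in one line from Remark 4.2. The factor $1/c(x)$ in \eqref{soluz1} is the Jacobian needed so that $p$ integrates to one in $dx$; the prefactored density instead satisfies the adjoint (conservation) form $(\partial_t^{2\nu}+C_1t^{-\nu}\partial_t^\nu)p=\partial_x\bigl(c(x)\partial_x(c(x)p)\bigr)$, as one checks immediately from $c(x)p=q(y(x),t)$. This conflation of the ``shape function'' solving the equation with the normalized density runs through the paper (compare \eqref{fon} and the formula of Section 1), so your proof should either verify the PDE for $q(y(x),t)$ and then append the Jacobian as a normalization, or restate the equation in adjoint form for the density; as written, the cancellation you plan to verify is false.
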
 
    
    \section{Planar random motions with space--varying velocity}
    
    We start our analysis from the damped wave equation with space--depending velocities as follows
    \begin{equation}
    \frac{\partial^2 p}{\partial t^2}+2\lambda \frac{\partial p}{\partial t}= c_1(x)\frac{\partial}{\partial x}c_1(x)\frac{\partial p}{\partial x}+c_2(y)\frac{\partial}{\partial y}c_2(y)\frac{\partial p}{\partial y}.
       \end{equation} 
    By taking the change of variables
    \begin{equation}
    \begin{cases}
    z =\int_0^x\frac{dx'}{c_1(x')}\\
    w = \int_0^y \frac{dy'}{c_2(y')}, \quad (x,y)\in \mathbb{R}^2,
    \end{cases}
    \end{equation}
    we obtain 
     \begin{equation}\label{4.3}
        \frac{\partial^2 p}{\partial t^2}+2\lambda \frac{\partial p}{\partial t}= \frac{\partial^2 p}{\partial z^2}+\frac{\partial^2 p}{\partial w^2}.
      \end{equation} 
     The absolutely continuous component of the distribution of the position $(X(t),Y(t))$ of the moving particle performing the planar
     motion described in the introduction satisfies \eqref{4.3} (see \cite{kole}).
     Therefore, returning to the original variables $(x,y)$ we are able to understand the role played by the variable velocity on the 
     model considered in \cite{kole}.
     The absolutely continuous component of the probability law is given by
     \begin{equation}\label{plane}
 	 p(x,y,t)=\frac{\lambda}{2\pi c_1(x)c_2(y)}\frac{\exp\bigg\{-\lambda t+\lambda\sqrt{t^2-\bigg|\displaystyle\int_0^x\frac{dx'}{c_1(x')}\bigg|^2-\bigg|\displaystyle\int_0^y\frac{dy'}{c_2(y')}\bigg|^2}\bigg\}}{\sqrt{t^2-\bigg|\displaystyle\int_0^x\frac{dx'}{c_1(x')}\bigg|^2-\bigg|\displaystyle\int_0^y\frac{dy'}{c_2(y')}\bigg|^2}},
     \end{equation}
     provided that both $c_1(x)$ and $c_2(y)$ are positive and such that $\displaystyle\int_0^x\frac{dx'}{c_1(x')}<\infty$ for all $x\in \mathbb{R}$ and
     $\displaystyle \int_0^y\frac{dy'}{c_2(y')}<\infty$ for all $y\in \mathbb{R}$, respectively.
     Therefore the support of $p(x,y,t)$ is given by the set
     \begin{equation}
     \mathcal{D}:=\bigg\{(x,y):\displaystyle\bigg|\int_0^x\frac{dx'}{c_1(x')}\bigg|^2+\displaystyle \bigg|\int_0^y\frac{dy'}{c_2(y')}\bigg|^2<t^2\bigg\}.
     \end{equation}
     The set $\mathcal{D}$ is therefore a \textit{deformation} of the circle representing the support of $(X(t),Y(t))$ in the case of constant 
     velocity.
     From formula \eqref{plane}, we can extract the conditional distribution of this class of generalized planar random motions. Since
     \begin{align}
     P\{X(t)\in dx, Y(t)\in dy\} = \sum_{n=0}^{\infty}P\{X(t)\in dx, Y(t)
     \in dy|N(t)= n\}P\{N(t)= n\}dxdy,
     \end{align}
     where $P\{N(t)= n\}$ is the homogeneous Poisson distribution of rate $\lambda$, we have that the conditional distribution is obviously given by
     \begin{equation}\label{conditional}
     P\{X(t)\in dx, Y(t) \in dy|N(t)= n\}= \frac{n}{2\pi t^n}\bigg[t^2-\bigg|\int_0^x\frac{dx'}{c_1(x')}\bigg|^2-\bigg|\int_0^y\frac{dy'}{c_2(y')}\bigg|^2\bigg]^{\frac{n}{2}-1}\frac{dxdy}{c_1(x)c_2(y)}
     \end{equation}
     The planar motion with space-varying velocity after $n$ changes of direction can be described as 
     \begin{equation}\label{rve}
     \begin{cases}
     X(t)= \displaystyle\sum_{j=1}^{n+1}\left(\int_{t_{j-1}}^{t_j} c_1(X(s))ds\right)\cos \theta_j\\
     Y(t)=\displaystyle\sum_{j=1}^{n+1}\left(\int_{t_{j-1}}^{t_j}c_2(Y(s))ds\right)\sin \theta_j,
     \end{cases}
     \end{equation}
     where $0= t_0<t_1< \dots<t_j< \dots<t_n<t_{n+1}= t$ are the epochs of the Poisson process and $\theta_j$ are the directions 
     of motion assumed at times $t_j$. The reader can ascertain that \eqref{rve} coincides with equation (12) of \cite{kole}
     in the case $c_1 = c_2 = const.$. Furthermore the conditional characteristic function of \eqref{rve} become
     \begin{align}
     \nonumber &\mathbb{E}\{e^{i\alpha X(t)+i\beta Y(t)}|N(t)=n\}  =\frac{n!}{(2\pi)^{n+1}t^n}\int_0^t dt_1\dots\int_{t_{n-1}}^t dt_n\int_0^{2\pi}d\theta_1\dots \int_0^{2\pi}
     d\theta_{n+1}\times \\
     \nonumber & \times \exp\bigg\{i\alpha \sum_{j=1}^{n+1}\left(\int_{t_{j-1}}^{t_j}c_1(X(s))ds\right)\cos\theta_j+i\beta\sum_{j=1}^{n+1}\left(\int_{t_{j-1}}^{t_j}c_2(Y(s))ds\right)\sin\theta_j\bigg\}\\
     \nonumber & = \frac{n!}{t^n}\int_0^t dt_1\dots \int_{t_{n-1}}^t dt_n \prod_{j=1}^{n+1} J_0\left(\sqrt{\alpha^2\left(\int_{t_{j-1}}^{t_j} c_1(X(s))ds\right)^2+\beta^2\left(\int_{t_{j-1}}^{t_j}c_2(Y(s)) ds\right)^2}\right).
     \end{align}
     By means of the change of variable
     \begin{equation}
     \begin{cases}
     du_j= c_1(X(s))ds\\
     dv_j = c_2(Y(s))ds,
     \end{cases}
     \end{equation}
     we can reproduce the calculations in the proof Theorem 1 in \cite{kole} in order to obtain the conditional distribution \eqref{conditional}. We can conclude that equation \eqref{plane} gives the absolutely continuous component of the probability law of the random vector $(X(t), Y(t))$ defined
     in \eqref{rve}.
     
     In order to understand the role of considering different velocities on both axes we consider a general domain that includes some interesting cases. It corresponds to taking the space-dependent velocities of the form $c_1(x)= \frac{|x|^\gamma}{c_1}$ and $c_2(x)=\frac{|x|^\beta}{c_2}$, with $\gamma, \beta < 1$ and $c_1, c_2>0$.
     With this choice, we obtain a family of probability laws concentrated in the domains of the form
     \begin{equation}
     \mathcal{D}_{\gamma, \beta}:=\bigg\{(x,y):\left(\frac{c_1 |x|^{1-\gamma}}{1-\gamma}\right)^2+\left(\frac{c_2 |y|^{1-\beta}}{1-\beta}\right)^2<t^2\bigg\}.
     \end{equation}
     \\
     This means that the boundary of the support of this family of probability law is given by a superellipse, also known as a Lam\'e curves including a wide class of geometrical figures like hypoellipses (for $\gamma =\beta<0$) and hyperellipses (for $\gamma =\beta>0$).
       We consider, in particular, two interesting cases.\\
          The first one, is the case in which $c_1(x)= c_1$ and
          $c_2(y)= c_2$ and $c_1\neq c_2\neq 0$. In this case the support of the probability law is clearly given by the ellipse:
          \begin{equation}
          \mathcal{D}_{0,0}:\bigg\{(x,y):c_1^2 |x|^2 +c_2^2 |y|^2<t^2\bigg\}.
          \end{equation}
          The second interesting case is given by the choice $c_1(x)= c_1 x^{2/3}$ and $c_2(y)= c_2 y^{2/3}$, leading to the compact support
          \begin{equation}
           \mathcal{D}_{2/3,2/3}:\bigg\{(x,y):9c_1^2 |x|^{2/3}+9c_2^2|y|^{2/3}<t^2\bigg\}.
          \end{equation}
          For $c_1=c_2 =1/3$ we obtain as boundary of $\mathcal{D}_{2/3,2/3}$, the astroid (see Figure 1). For $c_1\neq c_2\neq 1$ we have instead a squeezed astroid, possibly on both axes.
      \begin{figure}
                             \centering
                             \includegraphics[scale=.23]{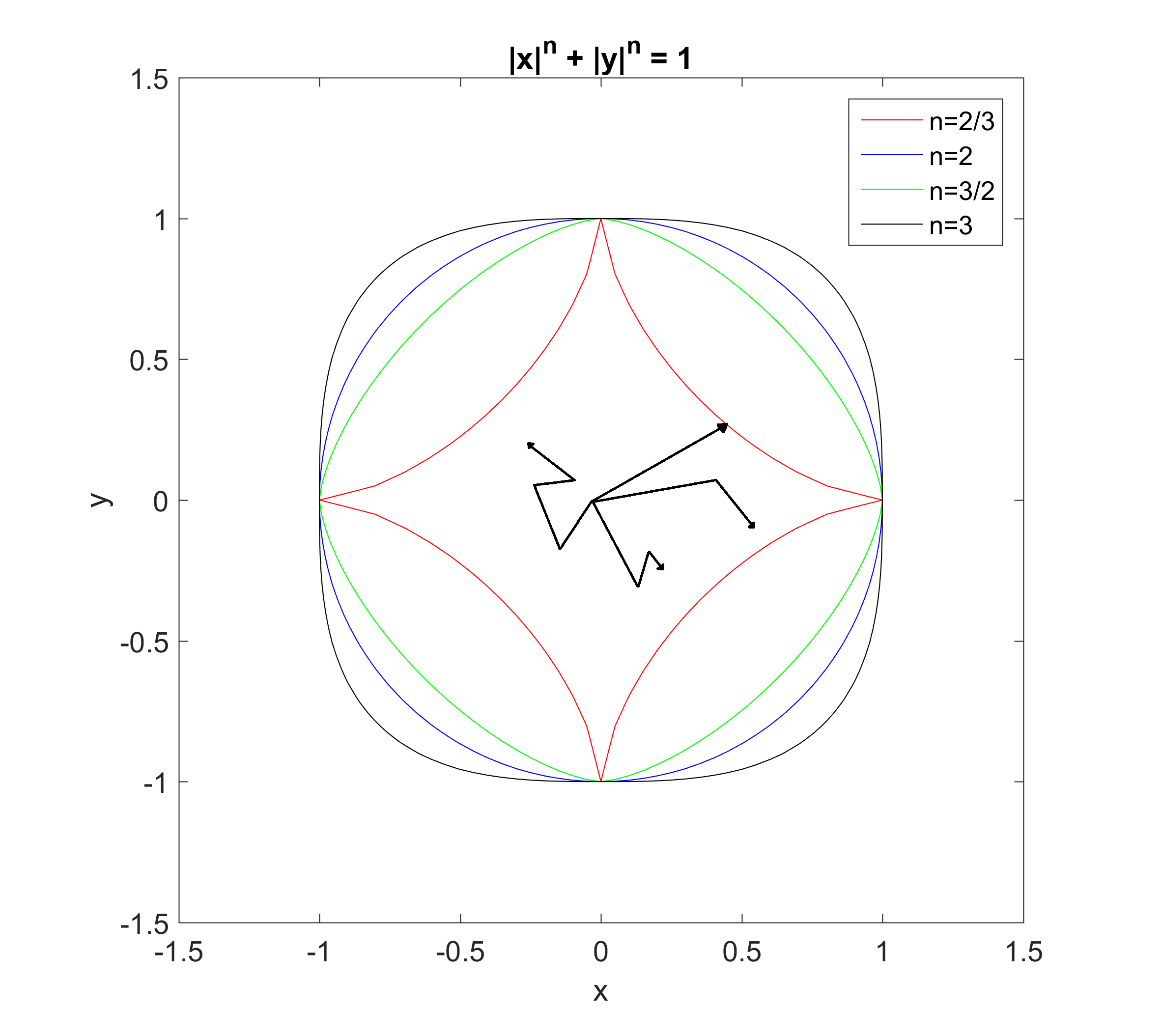}
                             \caption{We represent the boundary of the compact support $\mathcal{D}_{\gamma,\beta}$ of the family of probability laws \eqref{plane}, in the hypoelliptic ($n= 2/3$ leading to the astroid and $n= 3/2$), elliptic ($n=2$) and hyperelliptic case ($n= 3$). Here we assume that $c_1 = c_2 =1$ and $n = 2(1-\gamma)= 2(1-\beta)$. We show, as an example, four sample paths  with zero, two, three and four changes of direction, in the case when the random motion takes place in the astroid. 
                              } 
                             \label{figura0}
                  \end{figure}
     \newpage
                  
     Another interesting class of $d$-dimensional random motions at finite velocities is related to the EPD equation 
     \begin{equation}\label{EPs}
     				\frac{\partial^2 v}{\partial t^2}+\frac{2\alpha+d-1}{t}\frac{\partial v}{\partial t}= \sum_{j=1}^d
     				c_j(x_j)\frac{\partial}{\partial x_j}c_j(x_j)\frac{\partial v}{\partial x_j}.
     				\end{equation}
   In this case the probability law $p(x_1,x_2,\dots,x_d,t)$ of the particle moving in the $d$-dimensional space has the form 
     				  \begin{align}\label{fonos}
     				  &p(x_1,x_2, \dots, x_d, t) =  \frac{1}{\prod_{j=1}^d c_j(x_j)}\frac{\Gamma(\alpha+\frac{d}{2})}{\pi^{d/2}\Gamma(\alpha)t^{d+2\alpha-2}}\left(t^2-\sum_{j=1}^d
     				  \bigg|\int_0^{x_j}\frac{du_j}{c_j(u_j)}\bigg|^2\right)^{\alpha-1},\\ 
     				  \nonumber &\mbox{for $\displaystyle\sum_{j=1}^d
     				       				  \bigg|\int_0^{x_j}\frac{du_j}{c_j(u_j)}\bigg|^2<t^2$}
     				     \end{align}
     and represents a solution of \eqref{EPs}. The projection on the axes $x_1, \dots, x_d$ of the probability law \eqref{fonos} coincides with the one-dimensional motion dealt with in Section 3.
     
     A more general random motion in $\mathbb{R}^d$ with space-varying velocities leads to equation
     \begin{equation}
     \frac{\partial^2 p}{\partial t^2}+2\lambda(t)\frac{\partial p}{\partial t}=\sum_{j=1}^d c_j(x_1,\dots x_j, \dots, x_d) \frac{\partial}{\partial x_j}c_j(x_1,\dots x_j, \dots, x_d) \frac{\partial}{\partial x_j}p.
     \end{equation}
     This can be object of future research.

        \end{document}